\documentclass[11pt]{svmult}
\usepackage{amsmath}
\usepackage{mathptmx,helvet,courier,graphicx,makeidx,multicol,footmisc}
\usepackage{amssymb}
\usepackage{thmtools}
\usepackage{thm-restate}

\newcommand{\hms}[1]{\textup{LSP(#1)}}
\newcommand{\sparagraph}[1]{\noindent\textbf{#1}\quad}

\title*{High Multiplicity Scheduling with Switching Costs for few Products}
\author{Micha\"{e}l Gabay \and Alexander Grigoriev \and Vincent\,J.C.\,Kreuzen \and Tim Oosterwijk}
\institute{Micha\"{e}l Gabay\at{Laboratoire G-SCOP, Grenoble}, \email{michael.gabay@g-scop.grenoble-inp.fr} \and
Alexander Grigoriev \and Vincent\,J.C.\,Kreuzen \and Tim Oosterwijk\at{School of Business and Economics, Maastricht University}\\ \email{\{a.grigoriev,\,v.kreuzen,\,t.oosterwijk\}@maastrichtuniversity.nl}
}

\providecommand{\openbox}{\leavevmode
  \hbox to.77778em{%
  \hfil\vrule
  \vbox to.675em{\hrule width.6em\vfil\hrule}%
  \vrule\hfil}}
\makeatletter
\DeclareRobustCommand{\qed}{%
  \ifmmode
    \eqno \def\@badmath{$$}
    \let\eqno\relax \let\leqno\relax \let\veqno\relax
    \hbox{\openbox}%
  \else
    \leavevmode\unskip\penalty9999 \hbox{}\nobreak\hfill
    \quad\hbox{\openbox}%
  \fi
}
\makeatother

\begin{document}
\date{}

\maketitle

\pagenumbering{arabic}

\abstract{
We study several variants of the single machine capacitated lot sizing problem with sequence-dependent setup costs and product-dependent inventory costs.
Here we are given one machine and $n \geq 1$ types of products that need to be scheduled. Each product is associated with a constant demand rate $d_i$, production rate $p_i$
and inventory costs per unit $h_i$. When the machine switches from producing product $i$ to product $j$, setup costs $s_{i,j}$ are incurred.
The goal is to minimize the total costs subject to the condition that all demands are satisfied and no backlogs are allowed.
\\
In this work, we show that by considering the high multiplicity setting and switching costs, even trivial cases of the corresponding ``normal'' counterparts
become non-trivial in terms of size and complexity. We present solutions for one and two products.
}

\section{Introduction\label{section:introduction}}

The area of High Multiplicity Scheduling is still largely unexplored.
Many problems that are easy in the normal scheduling setting become hard when lifted to their high multiplicity counterparts.
In this work, we study a single machine scheduling problem with sequence dependent setup costs called \emph{switching costs}, and under high multiplicity encoding of the input.
In this problem, we have a single machine which can produce different types of products.
Each day, only one type of product can be produced. Overnight, the machine can be adjusted to produce another type of product the following day.

\sparagraph{Related Work.}
High multiplicity scheduling problems have been investigated by several researchers.
Brauner et al.~\cite{BCGK05} provided a detailed framework for the complexity analysis of high multiplicity scheduling problems. We refer the reader to this paper for an excellent survey of related work in this field.
Madigan~\cite{madigan1968scheduling} and Goyal~\cite{goyal1973scheduling} both study a variant of our problem where setup costs only depend on the product which will be produced and holding costs are product-independent. The former proposes a heuristic for the problem, whereas the latter solves the problem to optimality for a fixed horizon.

\section{The Model and Basic Properties\label{section:preliminaries}}

We model the general problem for multiple products as follows:
we have a single machine that can produce a single type of product at any given time and we are given a set of products $J = \{1, \ldots, n\}$, and for each product $i\in J$, we are given a maximum production rate $p_i$, demand rate $d_i$ and holding costs $h_i$ per unit.
Furthermore, we are given switching costs $s_{i,j}$ for switching from producing product $i$ to producing product $j$.
The problem is to find an optimal cyclic schedule $S^*$ that minimizes the average costs per unit of time $\bar{c}(S^*)$.
Note that for each product $i$, the rates $d_i$ and $p_i$ and costs $h_i$ are assumed to be constant over time and positive.
Observe that the input is very compact. Let $m$ be the largest number in the input, then the input size is $\mathcal{O}(n\log m)$.
	
We distinguish three variants:
The \textbf{Continuous} case, where the machine can switch production at any time;
the \textbf{Discrete} case where the machine can switch production only at the end of a fixed unit of time e.g. a day;
and the \textbf{Fixed} case, where the machine can switch production only at the end of a fixed unit of time,
and each period in which the machine produces product $i$, a full amount of $p_i$ has to be produced (in the other cases, we can lower production rates).
We assume holding costs are paid at the end of each time unit.

We denote by \hms{A,$n$} with $A\in \{C,D,F\}, n\in \mathbb{N}$ the Lot-Sizing Problem of scheduling $n$ products in the Continuous, Discrete or Fixed setting.
Let $\pi_{i}^{[a,b]}$ denote the amount of product ${i}$ produced during time interval $[a,b]$. Let $\pi_{i}^{t}=\pi_{i}^{[t-1,t]}$.
Let $x_i^t$ be a binary variable denoting whether product $i$ is produced during time interval $[t-1,t]$.
Let $q_i^t$ denote the stock level for product $i$ at time $t$. We explicitly refer to the stock for a schedule $S$ as $q_i^t(S)$.

We now state some basic properties for the three variants.

	\begin{restatable}{lemma}{restateNPhard}\label{lemma:NPhard}
All three variants of the Lot Sizing Problem are strongly NP-hard.
	\end{restatable}
	\begin{proof}
		The lemma follows directly from a reduction from the Traveling Salesman Problem. \qed
	\end{proof}

\begin{restatable}{lemma}{restatefeasibility}\label{theorem:feasibility}
For all three variants of the problem, there exists a feasible schedule if and only if \;$\sum_{i\in J} {d_i}/{p_i} \leq 1$\;.
\end{restatable}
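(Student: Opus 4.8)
The plan is to prove both directions for the three variants at once wherever possible, exploiting the fact that the stated condition is a pure capacity inequality. For necessity I would give a single generic argument valid in every model, and for sufficiency I would construct a schedule in the most restrictive (Fixed) setting and observe that it transfers to the other two.

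First, for necessity, suppose a feasible cyclic schedule $S$ with period $T$ exists in any of the three variants. Since $S$ is cyclic and admits no backlogs, the stock level of each product is periodic, so the stock balance $q_i^T = q_i^0 + \pi_i^{[0,T]} - d_i T$ forces the total production over one cycle to equal the total demand, $\pi_i^{[0,T]} = d_i T$. Because the machine produces product $i$ at rate at most $p_i$, if $\tau_i$ denotes the total time spent producing $i$ during the cycle, then $\pi_i^{[0,T]} \le p_i \tau_i$, whence $\tau_i \ge d_i T/p_i$. The machine produces at most one product at a time, so $\sum_{i \in J} \tau_i \le T$; combining these inequalities yields $\sum_{i \in J} d_i/p_i \le 1$. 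This argument uses nothing specific to the Continuous, Discrete or Fixed model, so it establishes necessity for all three.

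For sufficiency, assume $\sum_{i\in J} d_i/p_i \le 1$ and build a feasible schedule directly in the Fixed case. Since all data are integral, I would pick the cycle length $T := \operatorname{lcm}_{i\in J}\bigl(p_i/\gcd(d_i,p_i)\bigr)$, which guarantees that $n_i := d_i T/p_i$ is a positive integer for every $i$. Allocating exactly $n_i$ unit periods to product $i$ (say in a fixed block order, leaving the remaining periods idle) is feasible in terms of time, because $\sum_{i\in J} n_i = T\sum_{i\in J} d_i/p_i \le T$. Producing the full amount $p_i$ in each of these $n_i$ periods yields $n_i p_i = d_i T$ units per cycle, exactly matching consumption, so the stock trajectory of each product is periodic. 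It then remains only to rule out backlogs: the trajectory of $q_i$ is piecewise linear and returns to its starting value after one cycle, so if its minimum over the cycle is $-M_i$ with $M_i \ge 0$, setting the initial stock to $q_i^0 := M_i$ makes the schedule nonnegative throughout while preserving periodicity. Finally, since the Fixed model is the most restrictive of the three — a Fixed schedule switches only at integer times and never lowers its rate, hence is also valid in the Discrete and Continuous models — this single construction establishes sufficiency for all three variants.

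I expect the main obstacle to be the integrality in the Discrete and Fixed cases. In the Continuous case any time split $\tau_i = d_i T/p_i$ works for an arbitrary horizon $T$, but in the Fixed case production cannot be fractional within a period, which is exactly why the choice of $T$ as a least common multiple is needed: it makes each $n_i$ an integer and secures the cyclic production-equals-demand balance. The accompanying bookkeeping — verifying that the lifted stock levels stay nonnegative at every integer time — is routine once the periodic trajectory and the correct cycle length are in place.
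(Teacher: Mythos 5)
Your proposal is correct and follows essentially the same route as the paper: a generic time-accounting argument for necessity, and for sufficiency an explicit block schedule over an integral cycle length with sufficient initial stock, constructed in the Fixed case and inherited by the Discrete and Continuous cases. The only differences are cosmetic --- you take $T = \operatorname{lcm}_{i}\bigl(p_i/\gcd(d_i,p_i)\bigr)$ where the paper takes $C = \prod_{i} p_i$, and you spell out the initial-stock shift $q_i^0 := M_i$ that the paper summarizes as ``given enough initial stock.''
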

\begin{proof}
It is easy to see that ${d_i}/{p_i}$ is the fraction of time product $i$ needs to be scheduled on the machine and thus $\sum_{i \in J} {d_i}/{p_i}$ is at most 1. \qed
\end{proof}

\begin{restatable}{lemma}{restateidletimeCD}\label{prop:idletimeCD}
Let $S^*$ be an optimal schedule for \hms{C,$n$} or \hms{D,$n$}, with $n\in\mathbb{N}$. $S^*$ has no idle time.
\end{restatable}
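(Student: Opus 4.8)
The plan is to argue by contradiction using a local exchange. Assuming an optimal schedule $S^*$ contains idle time, I would construct a feasible schedule with strictly smaller average cost by ``pulling production forward'' into an idle period. The guiding idea is that idle time can always be filled by producing the preceding product at a lower rate (continuous case) or by splitting its production over the extra period (discrete case). Such a modification leaves the cyclic order of products---and hence every switching cost---untouched, keeps all other products' stock trajectories identical, and strictly lowers the holding cost of the affected product because it flattens its inventory peak. Since the cycle length and the sequence of switches are preserved, the average switching cost is unchanged too, so the total average cost strictly decreases, contradicting optimality. Note that the construction only ever \emph{lowers} a production rate, which is available precisely in \hms{C,$n$} and \hms{D,$n$} and not in the Fixed variant; this is exactly why the statement is restricted to these two cases.

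For \hms{C,$n$}, consider an idle interval $[a,b]$ with $b>a$. Since demands are positive the schedule is not idle everywhere, so we may assume $[a,b]$ is immediately preceded by a genuine production interval $[c,a]$ on which some product $i$ is produced at rate $r>0$ (merging consecutive idle intervals if necessary). I would replace these two intervals by a single interval $[c,b]$ on which $i$ is produced at the constant rate $r' = r(a-c)/(b-c) < r \le p_i$, chosen so that the total amount of $i$ produced is unchanged. A direct computation shows that $q_i^b$ is the same in both schedules and that no other product and no later part of the schedule is affected. On $[c,b]$ the original stock of $i$ is concave (slope $r-d_i$, then $-d_i$) while the new stock is affine with the same endpoint values; hence the new trajectory lies weakly below the old one, so it stays non-negative (an affine function between two non-negative values) and its area---the holding cost---is strictly smaller. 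This contradicts the optimality of $S^*$.

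For \hms{D,$n$} the same idea is carried out at the granularity of days. Let day $t+1$ be an idle day immediately preceded by a producing day $t$ (such a pair exists whenever there is idle time), and say day $t$ produces product $i$ in amount $r>0$. The key observation is that feasibility across the idle day forces a buffer: since every product's stock must stay non-negative at the end of day $t+1$, we get $q_i^t\ge d_i$ and therefore $q_i^{t-1}\ge 2d_i-r$. I would then produce $i$ in amounts $r_1$ and $r_2=r-r_1$ on days $t$ and $t+1$, with $r_1=\max\{0,\,d_i-q_i^{t-1},\,r-p_i\}$; the buffer inequality guarantees $r_1<r$ and $0\le r_1,r_2\le p_i$. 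One checks that $q_i^{t+1}$ is unchanged while $q_i^t$ strictly decreases, that all other stock levels and the product sequence are untouched, and that feasibility is preserved. Thus the holding cost strictly drops with no change in switching cost, again contradicting optimality.

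The routine parts are the arithmetic verifications that the endpoint stock levels match and that the modified trajectory is non-negative. The main thing to get right is the bookkeeping around switching costs: I must argue that replacing an idle stretch by additional production of the product \emph{already running} neither inserts, deletes, nor alters any switch, so that the entire change in cost is the strict reduction in holding cost. The one genuinely load-bearing inequality is the discrete-case buffer $q_i^t\ge d_i$, which is exactly what makes room to shift production into the idle day without violating non-negativity; everything else follows from the concavity/flattening comparison of the inventory curves.
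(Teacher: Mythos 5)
Your proof is correct, but it takes a genuinely different route from the paper's. The paper argues globally: it deletes the idle interval $[a,b]$ of length $t$, shifts all production after $b$ earlier by $t$, and compensates by reducing the last pre-idle production run of each product $i$ by $d_i t$ (the stock of at least $d_i t$ that each product must carry into the idle interval is exactly what becomes unnecessary), claiming a saving of at least $\sum_{i}\min\{\pi_i^*, d_i t\}h_i$. You instead keep the cycle length and the entire production sequence fixed and perform a purely local smoothing: in the continuous case you stretch the preceding run of product $i$ over the idle interval at the reduced rate $r' = r(a-c)/(b-c)$ and compare the affine new stock trajectory against the concave old one with the same endpoints; in the discrete case you split the amount $r$ over days $t$ and $t+1$, using the buffer inequality $q_i^t \ge d_i$ (forced by feasibility across the idle day) to guarantee room, and your arithmetic for $r_1=\max\{0,\,d_i-q_i^{t-1},\,r-p_i\}$ checks out ($r_1 < r$ follows from $q_i^{t-1}\ge 2d_i-r$ and $d_i>0$, and both $r_1,r_2$ land in $[0,p_i]$). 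Your fixed-cycle-length approach buys rigor that the paper's sketch arguably lacks: the paper's modification shortens the cycle from $C$ to $C-t$ while total switching costs stay the same, so the average switching cost per time unit \emph{increases}, and the paper never verifies that the holding-cost saving dominates this in the average-cost objective $\bar{c}$; your construction sidesteps that bookkeeping entirely, since cycle length, switch sequence, and hence average switching cost are all literally unchanged while holding cost strictly drops. The paper's approach, in exchange, handles all products simultaneously and makes explicit how much inventory an idle period forces the system to carry, a fact you also extract (your buffer inequality mirrors the paper's observation that stock at time $a$ is at least $d_i t$) but deploy differently. One small point worth stating explicitly in your continuous-case argument: a phase has constant rate by definition, so taking $[c,a]$ to be the final positive-rate phase of the preceding production period makes your constant-$r$ assumption without loss of generality.
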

\begin{proof}
If there is some idle time, we can simply decrease production rates to decrease holding costs. \qed
\end{proof}

\section{Single Product Case\label{section:k1}}

In most scheduling problems, scheduling a single product on a single machine is trivial.
However, considering a high multiplicity encoding takes away some of the triviality of this seemingly simple problem.

\sparagraph{Continuous Case\label{section:k1cont}.}
If a feasible schedule exists, we know that $p_1 \geq d_1$. In an optimal schedule, we produce to exactly meet demand, i.e. $\pi_1^{[a,b]} = d_1 (b-a)$.

\sparagraph{Discrete Case\label{section:k1disc}.}
If a feasible schedule exists, we know that $p_1 \geq d_1$. In an optimal schedule, we produce $d_1$ for every unit of time to exactly meet demand.

\sparagraph{Fixed Case\label{section:k1int}.}
The Fixed case for a single product is already non-trivial. We will prove the following theorem.

\begin{theorem}\label{theorem:optF1}
In an optimal schedule $S^*$ for \hms{F,1}, $\pi_1^t>0$ if and only if $q_1^{t-1} < d_1$.
\end{theorem}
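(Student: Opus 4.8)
The plan is to prove the two implications separately, using only feasibility for one direction and an exchange argument for the other. First I record the dynamics I will rely on. In the Fixed case $\pi_1^t\in\{0,p_1\}$, and the stock obeys $q_1^t=q_1^{t-1}+\pi_1^t-d_1$. By Lemma~\ref{theorem:feasibility} a feasible schedule requires $d_1\le p_1$, so during a production period the stock is non-decreasing while during an idle period it falls at rate $d_1$; hence the stock is lowest at the end of each idle period, and feasibility is equivalent to $q_1^t\ge 0$ for all $t$. The implication $q_1^{t-1}<d_1\Rightarrow\pi_1^t>0$ then needs only feasibility: contrapositively, if $\pi_1^t=0$ nothing is produced during $[t-1,t]$, so the stock drops to $q_1^{t-1}-d_1$ by the end of the period, and the no-backlog condition forces $q_1^{t-1}-d_1\ge 0$, i.e.\ $q_1^{t-1}\ge d_1$. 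This direction thus holds for every feasible schedule.

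For the converse I will argue by contradiction from the optimality of $S^*$. I call a production period $t$ \emph{premature} if $q_1^{t-1}\ge d_1$, and I assume some premature period exists. The key observation is that prematurity propagates along a block of consecutive production periods: if $t$ is premature and $t+1$ is also a production period, then $q_1^t=q_1^{t-1}+p_1-d_1\ge p_1\ge d_1$, so $t+1$ is premature as well. Because a cyclic schedule with $d_1<p_1$ must contain at least one idle period (the number $k$ of production periods in a cycle of length $T$ satisfies $kp_1=d_1T<p_1T$), the maximal run of production periods containing the premature period ends at some period $\tau$ immediately followed by an idle period $\tau+1$, and by the propagation $\tau$ is itself premature.

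The exchange is then to move the single unit of production from $\tau$ into the idle slot $\tau+1$. This leaves every end-of-period stock unchanged except $q_1^\tau$, which drops by $p_1$ to $q_1^{\tau-1}-d_1\ge 0$ (feasible precisely because $\tau$ is premature), while $q_1^{\tau+1}$ is restored to its original value and all later stocks are unaffected. The modified schedule is therefore feasible and its holding cost is smaller by exactly $h_1p_1>0$, contradicting the optimality of $S^*$. Hence no premature production period exists, which is the implication $\pi_1^t>0\Rightarrow q_1^{t-1}<d_1$. The degenerate case $d_1=p_1$ is immediate, since the machine must run every period and an optimal schedule then keeps zero stock, so both sides of the equivalence hold trivially.

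I expect the main obstacle to be the exchange step, specifically verifying that delaying a unit of production preserves feasibility. The propagation observation is what makes this clean: it guarantees that the production we delay is always directly followed by a free period into which it can be pushed, so a single one-step shift suffices and the feasibility check reduces to the inequality $q_1^{\tau-1}\ge d_1$ already supplied by prematurity.
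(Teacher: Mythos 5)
Your proof is correct, and it takes a genuinely different route from the paper. The paper never gives Theorem~\ref{theorem:optF1} a standalone exchange proof: it obtains the statement as a byproduct of its gcd analysis, where Lemma~\ref{lemma:k1intlem1} fixes the minimum cycle length at $p_1/\gcd(p_1,d_1)$ and the proof of Lemma~\ref{lemma:optCycleInt1} shows that in a shortest optimal cycle the stock levels $q_1^t$ take exactly the values $0,\mathcal{G},\ldots,p_1-\mathcal{G}$, i.e.\ all multiples of $\mathcal{G}=\gcd(p_1,d_1)$ below $p_1$; producing with $q_1^{t-1}\ge d_1$ would create a stock of at least $p_1$, contradicting that characterization. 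You instead argue locally: prematurity propagates to the end of a production run because $q_1^t=q_1^{t-1}+p_1-d_1\ge p_1\ge d_1$; cyclicity gives $kp_1=d_1T$, so $d_1<p_1$ forces an idle slot after the run; and delaying the run's last production by one period decreases exactly one end-of-period stock by $p_1$, stays feasible precisely because $q_1^{\tau-1}\ge d_1$, and saves $h_1p_1>0$ per cycle, contradicting optimality (the converse direction, as you note, is pure feasibility and needs no optimality). Your argument is more elementary and broader in scope --- it applies to every optimal cyclic schedule, not only the shortest one, and uses no number-theoretic structure --- while the paper's route buys the cycle length of Eq.~(\ref{eq:Int1Length}) and the closed-form cost of Eq.~(\ref{eq:Int1OPT}) in the same stroke, which it needs for the polynomial-delay list-generating algorithm. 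Two small points to tighten in your write-up: when $\tau$ is the final period of the written cycle, rotate the cyclic schedule first so the one-step shift does not straddle the re-indexing point; and observe that the shift may split one production run into two, which is harmless only because \hms{F,1} charges no switching costs --- both are one-sentence fixes.
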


We first characterize the minimum cycle length for \hms{F,1}, followed by the costs of an optimal schedule. The proof shows that for an optimal schedule $S^*$, the inventory levels for the time units in the schedule are the multiples of $\gcd(p_1,d_1)$ smaller than $p_1$.

\begin{lemma}\label{lemma:k1intlem1}
The minimum cycle length for \hms{F,1} is
\begin{equation}\label{eq:Int1Length}
l^* = \frac{p_1}{\gcd(p_1,d_1)}\;.
\end{equation}
\end{lemma}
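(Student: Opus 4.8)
The plan is to establish \eqref{eq:Int1Length} by a two-sided argument: a divisibility lower bound proving that no feasible cyclic schedule is shorter than $l^*$, and an explicit construction realizing a schedule of length exactly $l^*$ without backlog. First I would record the stock recurrence. In the Fixed case each unit period produces either the full batch or nothing, so $\pi_1^t = x_1^t p_1$ with $x_1^t \in \{0,1\}$ and $q_1^t = q_1^{t-1} + x_1^t p_1 - d_1$. For any feasible cyclic schedule of length $l$, let $k = \sum_{t=1}^{l} x_1^t$ be its number of production periods. Summing the recurrence over one cycle and using cyclicity $q_1^l = q_1^0$ gives the conservation identity $k\,p_1 = l\,d_1$, i.e. total production equals total demand over the cycle. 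This identity drives the whole proof.

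For the lower bound I would write $g = \gcd(p_1,d_1)$ and divide the identity by $g$ to obtain $k\,(p_1/g) = l\,(d_1/g)$, where now $\gcd(p_1/g,\,d_1/g) = 1$. Coprimality forces $(p_1/g) \mid l$, so the smallest positive feasible length is $l = p_1/g = l^*$, attained together with $k = d_1/g$. Hence every feasible cyclic schedule has length at least $l^*$.

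It then remains to exhibit a feasible schedule of length exactly $l^*$, and I expect the no-backlog verification to be the only delicate point. I would take exactly $k^* = d_1/g$ production periods among the $l^*$ periods, which by the conservation identity is automatically cyclic; the task reduces to ordering them so that cumulative production never trails cumulative demand (equivalently, $q_1^t \geq 0$ throughout). The clean choice, which also matches the inventory-level characterization announced just after the lemma, is to spread production as evenly as possible by setting $x_1^t = 1$ exactly when $\lceil t d_1/p_1 \rceil > \lceil (t-1) d_1/p_1 \rceil$, starting from $q_1^0 = 0$. Telescoping then gives cumulative production $\lceil t d_1/p_1\rceil p_1$ after $t$ periods, so $q_1^t = \lceil t d_1/p_1\rceil p_1 - t d_1 \in [0, p_1)$, which is non-negative (and a multiple of $g$, ranging over $\{0, g, \ldots, p_1 - g\}$ as $t$ runs over one cycle). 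Since the stock is monotone within each period (non-increasing when idle, non-decreasing when producing as $p_1 \geq d_1$) and non-negative at the period boundaries, no backlog occurs, and at $t = l^*$ it returns to $0$. Combining this construction with the lower bound yields that the minimum cycle length is exactly $l^* = p_1/\gcd(p_1,d_1)$.
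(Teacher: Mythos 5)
Your proof is correct, and its lower-bound half coincides with the paper's own argument: both rest on the conservation identity obtained by summing the stock recurrence over one cycle (your $k\,p_1 = l\,d_1$, which the paper writes as $t = \sum_u x_1^u \, p_1/d_1$), followed by the divisibility step after cancelling $\gcd(p_1,d_1)$ --- a step the paper compresses into ``clearly, $t$ is minimized when $\sum_u x_1^u = d_1/\mathcal{G}$''. Where you genuinely go beyond the paper is the attainability half: the paper never exhibits a feasible schedule of length $l^*$, offering only the remark that producing $p_1$ covers $\lfloor p_1/d_1 \rfloor$ time units with leftover $p_1 \bmod d_1$, so strictly speaking it proves only that no shorter cycle can exist. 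Your balanced-word construction, $x_1^t = 1$ exactly when $\lceil t d_1/p_1 \rceil > \lceil (t-1) d_1/p_1 \rceil$, closes that gap: it gives $q_1^t = \lceil t d_1/p_1 \rceil p_1 - t d_1 \in [0, p_1)$, hence no backlog, cyclicity at $t = l^*$, and exactly $d_1/\gcd(p_1,d_1)$ production periods, as the conservation identity requires. As a side benefit, your construction directly verifies the structural claim announced just after the lemma --- that the stock levels are precisely the multiples of $\gcd(p_1,d_1)$ below $p_1$ --- which the paper only establishes later, inside the proof of Lemma~\ref{lemma:optCycleInt1} via a uniqueness-of-stock-values argument. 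In short: same core identity, but your two-sided version is the more complete proof of the stated lemma.
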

\begin{proof}
Denote $\mathcal{G} = \gcd(p_1,d_1)$. Assume without loss of generality that $q_1^0<p_1$.
Since the cycle must be feasible, we have that $d_1 \leq p_1$.

Producing $p_1$ provides stock for $\left\lfloor {p_1}/{d_1}\right\rfloor $ time units, with a leftover stock of $p_1\mod d_1$.
Let stock at time $t$ be $q_1^t = q_1^{t-1}+\pi_1^t-d_1$. The schedule is cyclic when $q_1^t=q_1^0$ for $t>0$.
For a minimum cycle length, we want to minimize over $t$ such that $q_1^t = q_1^0 + \sum_{u=1}^t{\pi_1^u}-d_1 t = q_1^0$.
Rewriting gives \begin{equation*}\label{eq:rew}
t = \frac{\sum_{u=1}^t{\pi_1^u}}{d_1} = \sum_{u=1}^t{x_1^u} \frac{p_1}{d_1}.
\end{equation*}
Clearly, $t$ is minimized when $\sum_{u=1}^t{x_1^u} = \frac{d_1}{\mathcal{G}}$, and thus $t = \frac{p_1}{\mathcal{G}} = l^*$\;. \qed
\end{proof}

Using this lemma we compute the costs of an optimal schedule.

\begin{lemma}\label{lemma:optCycleInt1}
The shortest optimal cyclic schedule $S^*$ for \hms{F,1} has unit costs of
\begin{equation}\label{eq:Int1OPT}
\bar{c}(S^*) = \frac{h_1}{2} \left( p_1 - \gcd(p_1,d_1) \right)\;.
\end{equation}
\end{lemma}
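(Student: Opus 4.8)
The plan is to reduce the unit cost to a sum over the inventory levels attained during one minimal cycle, and then to identify those levels explicitly. Write $G = \gcd(p_1,d_1)$, so that by Lemma~\ref{lemma:k1intlem1} the shortest cycle has length $l^* = p_1/G$. Since a single product incurs no switching cost and holding costs of $h_1 q_1^t$ are charged at the end of each of the $l^*$ time units of the cycle, the average cost per unit of time is
\[
\bar{c}(S^*) = \frac{h_1}{l^*}\sum_{t=1}^{l^*} q_1^t,
\]
and the whole problem comes down to evaluating $\sum_{t=1}^{l^*} q_1^t$.

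First I would observe that the trajectory of inventory levels is deterministic. By Theorem~\ref{theorem:optF1} the decision is forced, $x_1^t = 1 \Leftrightarrow q_1^{t-1} < d_1$, so $q_1^t = q_1^{t-1} + x_1^t p_1 - d_1$ is determined entirely by $q_1^{t-1}$. Because $G$ divides both $p_1$ and $d_1$, every step changes the stock by a multiple of $G$, hence the residue $q_1^t \bmod G$ is invariant along the trajectory. The per-cycle stock sum, and thus the cost, is smallest when this common residue equals $0$; this value is attainable by starting the cycle with empty inventory, so an optimal $S^*$ has all inventory levels equal to multiples of $G$.

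Next I would pin down the exact set of levels. The forced rule gives $0 \le q_1^t < p_1$ for all $t$: after a non-production step the stock decreases yet stays nonnegative (we idle only when $q_1^{t-1}\ge d_1$), while after a production step $q_1^{t-1}<d_1$ yields $q_1^t<p_1$. Thus each $q_1^t$ lies among the $l^* = p_1/G$ multiples of $G$ in $\{0,G,2G,\dots,p_1-G\}$. Since the trajectory is deterministic, any repeated level inside a cycle would expose a strictly shorter cycle, contradicting the minimality in Lemma~\ref{lemma:k1intlem1}; hence $q_1^0,\dots,q_1^{l^*-1}$ are pairwise distinct. Having $l^*$ distinct values inside a candidate set of size exactly $l^*$ forces the inventory levels to be precisely $\{0,G,2G,\dots,p_1-G\}$, each occurring once per cycle. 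I expect this to be the crux: upgrading the local production rule together with minimality into the global claim that the inventory levels enumerate all multiples of $G$ below $p_1$.

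With the set of levels identified, the remaining computation is routine:
\[
\sum_{t=1}^{l^*} q_1^t = \sum_{k=0}^{l^*-1} kG = G\,\frac{l^*(l^*-1)}{2},
\]
so that, using $G\,l^* = p_1$ in the final step,
\[
\bar{c}(S^*) = \frac{h_1}{l^*}\cdot G\,\frac{l^*(l^*-1)}{2} = \frac{h_1\,G\,(l^*-1)}{2} = \frac{h_1}{2}\bigl(p_1 - \gcd(p_1,d_1)\bigr),
\]
which is exactly the claimed unit cost in \eqref{eq:Int1OPT}.
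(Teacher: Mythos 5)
Your overall skeleton matches the paper's computation: show that the inventory levels in one minimal cycle are $l^*$ pairwise distinct multiples of $G$ lying in $[0,p_1)$, conclude by pigeonhole that they are exactly $\{0,G,\dots,p_1-G\}$, and sum the arithmetic series. The genuine problem is \emph{how} you obtain the two key facts, the bound $q_1^t<p_1$ and the distinctness of levels: you derive both from Theorem~\ref{theorem:optF1}. But in the paper that theorem is the \emph{conclusion} of this section, not a tool available here --- the text preceding Lemma~\ref{lemma:k1intlem1} announces that the two lemmas together constitute the proof of Theorem~\ref{theorem:optF1} (``The proof shows that for an optimal schedule $S^*$, the inventory levels \dots are the multiples of $\gcd(p_1,d_1)$ smaller than $p_1$''). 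So invoking the forced rule $x_1^t=1\Leftrightarrow q_1^{t-1}<d_1$ inside the proof of Lemma~\ref{lemma:optCycleInt1} is circular as the paper is organized. The gap is repairable: the ``if'' direction of the rule is mere feasibility, and the ``only if'' direction follows from a short exchange argument (postponing a production step taken when $q_1^{t-1}\ge d_1$ strictly lowers holding costs), but you would have to supply that argument rather than cite the theorem. The paper sidesteps determinism entirely: it gets $q_1^t<p_1$ directly from optimality (a stock level of at least $p_1$ means production can be delayed, so the schedule is not optimal) and distinctness from the minimality of the cycle, with no reference to Theorem~\ref{theorem:optF1}.

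A secondary gap: you read off from Lemma~\ref{lemma:k1intlem1} that ``the shortest cycle has length $l^* = p_1/G$'' and apply this to the shortest \emph{optimal} cycle. That lemma only bounds feasible cycle lengths from below; it does not rule out that every optimal schedule is strictly longer than $p_1/G$. The paper closes this hole explicitly: supposing $l^* > p_1/G$, the distinct stock values (all multiples of $G$) would force some $q_1^t \ge p_1$, contradicting optimality, whence the shortest optimal cycle has length exactly $p_1/G$. Your determinism machinery could also close it --- a repeated state in a longer optimal cycle yields a shorter cycle traversing the same periodic trajectory, hence with equal unit cost --- but as written you assume the conclusion. Similarly, your claim that the cost is minimized when the common residue $q_1^t \bmod G$ equals zero is asserted rather than argued; it needs the shift argument of Remark~\ref{remark1} (the forced trajectory started from residue $r<G$ is the residue-zero trajectory shifted up by $r$, costing an extra $h_1 r$ per time unit), which is exactly the normalization $q_1^0=0$ the paper invokes.
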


\begin{proof}
Denote $\mathcal{G} = \gcd(p_1,d_1)$. Assume without loss of generality that the initial stock $q_1^0=0$ (see Remark~\ref{remark1} in the appendix).
Let $S^*$ be the optimal cyclic schedule with length $l^*$. Since $S^*$ is cyclic, $q_1^t$ has unique values for $t = 0, \ldots, l^*-1$.
Suppose $l^* > {p_1}/{\mathcal{G}}$.
Then each $q_1^t$ is a multiple of $\mathcal{G}$.
Since $l^* > {p_1}/{\mathcal{G}}$ and each $q_1^t$ has a unique value, there exists at least one $t$ such that $q_1^t\geq p_1$, and thus the schedule is not optimal.
Thus the length of the shortest optimal schedule is $l^* = {p_1}/{\mathcal{G}}$.

Since the total demand during the cycle is $d_1 l^*$ and each time unit of production produces $p_1$, we know that we produce during
${d_1 l^*}/{p_1} = {d_1}/{\mathcal{G}}$ time units. Since $q_1^t$ has a unique value for each $t < l^*$ and $q_1^0 = 0$, the stock values are all multiples of $\mathcal{G}$.
Hence, the values of $q_1^t$ are the multiples of $\mathcal{G}$ smaller than $p_1$. Since $p_1 = l^*\mathcal{G}$, the total stock for the cycle equals $\sum_{j=0}^{l^* - 1} j \mathcal{G}$.

Thus the total costs of $S^*$ are:
\begin{equation*}
h_1 \sum_{j=0}^{l^* - 1} j \mathcal{G} = h_1 \frac{1}{2} \mathcal{G} l^* (l^* - 1) = \frac{h_1 p_1}{2} \left( \frac{p_1}{\mathcal{G}} - 1 \right)\;. \qed
\end{equation*}
\end{proof}

The optimal schedule $S^*$ has length $l^*$ as in Eq.~(\ref{eq:Int1Length}), and total costs $l^*\bar{c}$ as in Eq.~(\ref{eq:Int1OPT}). The length of the cycle is linear in ${p_1}/{gcd(p_1,d_1)}$,
and Theorem~\ref{theorem:optF1} yields a polynomial delay list-generating algorithm.

\section{Continuous Case with two products\label{section:k2}}

Intuitively, the Continuous variant of the problem is less difficult than the Discrete one, which in turn is less difficult than the Fixed variant. In this section we show that for two products, even the Continuous case is already non-trivial.
We represent a cyclic schedule of length $C$ as a sequence:
\begin{equation*}\label{eq:sequence1}
 [t_0,t_1]_{j_0}^{r_0}, [t_1,t_2]_{j_1}^{r_1}, \ldots, [t_s,C]_{j_s}^{r_s} \;,
\end{equation*}
where $[t_i,t_{i+1}]_{j_i}^{r_i}$ denotes a \emph{phase} of the schedule, such that no two consecutive phases share the same $r_i$ and $j_i$, and in time interval $[t_i,t_{i+1}]$,
product $j_i\in J$ is produced at rate $r_i \leq p_{j_i}$.
A maximal sequence of consecutive phases of the same product $j_i$ is called a \emph{production period},
denoted by $[t_i,t_{i+1}]_{j_i}$.

We prove some structural results on the optimal schedule. The next lemma shows the machine produces every product $i$ only at rates $d_i$ and $p_i$ to minimize holding costs.

\begin{restatable}{lemma}{restatePhases}\label{lemma:k2contphases}
Consider \hms{C,$n$} for any $n \geq 2$. There is an optimal cycle $S^*$ such that for every product $i \in J$,
every production period of $i$ in $S^*$ consists of at most two phases.
For every production period, in the first phase the machine produces $i$ at a rate of $d_i$. During the second phase $i$ is produced at a rate of $p_i$.
\end{restatable}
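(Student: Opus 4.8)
The plan is to begin from any optimal cyclic schedule $S^*$ (one exists because, for a fixed sequence of products, the admissible rate profiles form a compact set and $\bar{c}$ is continuous), which by Lemma~\ref{prop:idletimeCD} we may take to have no idle time. I will reshape the production of one product $i$ at a time by local exchanges that preserve feasibility, cyclicity, and the total amount $d_i\cdot C$ of $i$ produced over the cycle, so that the cost can only weakly decrease. The first and decisive reduction is to show that we may assume the stock of each product $i$ equals $0$ at the start of each of its production periods. Note first that the initial level $q_i^0$ is a free parameter (subject only to the cyclic identity $q_i^0=q_i^C$ and to $q_i^t\ge 0$), so we may lower it until $\min_t q_i^t=0$; this is a pure vertical shift that changes no rate but lowers holding cost.

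To push the stock to $0$ at the start of \emph{every} production period, I would use a cross-period exchange. If some production period $P_k$ of product $i$ begins with stock $q_a>0$, take a sliver $\varepsilon\in(0,q_a)$ of production off the end of the preceding production period $P_{k-1}$ of $i$ and insert it at the end of $P_k$. This preserves the total production of $i$, leaves every other product's trajectory and the cyclicity intact, lowers $q_i^t$ by exactly $\varepsilon$ on the whole interval running from the (now earlier) end of $P_{k-1}$'s production, across the intervening gap, up to the reinserted production in $P_k$, and leaves $q_i^t$ unchanged elsewhere. All stocks stay nonnegative because the lowered stretch bottoms out at $q_a-\varepsilon\ge 0$, and the holding cost strictly drops. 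Hence at the optimum every production period of $i$ starts at stock $0$.

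Once the stock of $i$ starts each production period $[t_a,t_b]$ at $0$, I would fix the endpoints $q_i^{t_a}=0$ and $q_i^{t_b}=q_b$ and optimize only the rate profile $r(t)\in[0,p_i]$ inside, where $q_i'(t)=r(t)-d_i$ and $q_i^t\ge 0$. A within-period exchange that shifts an infinitesimal amount of production from an earlier time to a later one lowers the intermediate stock, and hence the cost, whenever the earlier rate exceeds the later one; so at the optimum the rate is nondecreasing in $t$. Starting from stock $0$, a nondecreasing rate means the stock first stays at its floor $0$, which forces exactly the rate $d_i$, and then rises; and since deferring the buildup as long as possible is cheapest, the rise happens in a single terminal burst at the maximal rate $p_i$. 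This yields the claimed two phases, rate $d_i$ followed by rate $p_i$, where either phase may be empty (for instance the first is empty when the period is so short that producing at $p_i$ throughout is forced).

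The main obstacle is the cross-period reduction to $q_a=0$: because Lemma~\ref{prop:idletimeCD} forbids idle time, one cannot simply start producing later, since the freed machine time would have to be handed to another product; the argument must instead redistribute production between two consecutive periods of the same product while certifying that nonnegativity of all stocks and cyclicity are maintained. Once that floor is secured, the two-phase shape follows from the routine nondecreasing-rate exchange.
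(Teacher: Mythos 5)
Your reduction to ``every production period of $i$ starts at stock $0$'' has a genuine gap at exactly the point you flag as the crux, and moreover the claim itself is too strong. The cross-period exchange removes $\varepsilon$ of production from the end of $P_{k-1}$ and inserts it \emph{at the end of} $P_k$ --- but insertion is only feasible where the current rate is strictly below $p_i$, and by your own final structure an optimal production period terminates in a burst at rate $p_i$, so the end of $P_k$ is precisely where there is no spare capacity. You never verify the rate cap at the insertion point. Worse, a period can be saturated, i.e.\ run at rate $p_i$ throughout: if the machine time allotted to product $i$ in the cycle equals $C d_i/p_i$ (which an optimal assignment can force), every period of $i$ must produce at $p_i$ for its whole duration, and such a period can begin with strictly positive stock --- then no insertion anywhere in $P_k$ is possible and the invariant ``stock $0$ at each period start'' is simply false at the optimum. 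This is not a formality, because the second half of your argument (the zero floor forcing the first phase to run at exactly $d_i$) invokes the invariant period by period.

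The gap is repairable, but only with a weaker statement: insert the sliver at the \emph{earliest} slack time inside $P_k$ rather than at its end (nonnegativity along the lowered stretch then holds, since the stock stays at least $q_a > \varepsilon$ throughout the gap and during any initial full-rate stretch of $P_k$), and replace the invariant by ``each period either starts at stock $0$ or produces at rate $p_i$ throughout''; a saturated period satisfies the lemma trivially as a single $p_i$-phase with empty first phase, but you must then recheck your within-period step under this weaker invariant. It is worth noting that the paper sidesteps all of this: it never normalizes stocks, and instead performs three local exchanges inside a fixed cycle --- shift production later to eliminate any rate below $d_i$ within a period, split any phase at an intermediate rate $r \in (d_i, p_i)$ into a $d_i$-phase followed by a $p_i$-phase with the same end stock, and swap adjacent $(p_i, d_i)$ phase pairs --- which yields the two-phase form regardless of the stock level at the start of a period. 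Your route, once patched, proves slightly more (a zero-stock normal form at unsaturated period starts), but the paper's argument is shorter precisely because it needs no global invariant.
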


We call a schedule a \emph{simple cycle} if there is exactly one production period for each product. The next lemma shows that in order to minimize holding costs, the optimal schedule for \hms{C,2} is a simple cycle.

\begin{lemma}\label{lemma:simplecycle1}
There exists an optimal schedule for \hms{C,2} that is a simple cycle.
\end{lemma}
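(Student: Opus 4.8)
The plan is to start from an optimal schedule $S^*$ that already has the structure of Lemma~\ref{lemma:k2contphases} and, by Lemma~\ref{prop:idletimeCD}, no idle time. Since production periods are maximal, consecutive periods belong to different products, so for two products the schedule is strictly alternating and decomposes into $k$ \emph{sub-cycles}, each being one production period of product $1$ followed by one of product $2$. If $k=1$ we are done, so assume $k\geq 2$, and the goal is to produce a simple cycle of no larger average cost. I would first normalize $S^*$ so that each production period begins exactly when the corresponding product's inventory is $0$. That the global minimum inventory of each product may be taken to be $0$ is immediate, since shifting the whole periodic inventory profile of a product down by its minimum leaves production, switches and cyclicity unchanged while lowering holding costs. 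Upgrading this to a zero at the start of \emph{every} production period is the step I expect to be the main obstacle: I would argue it by a local exchange that keeps all period boundaries fixed (hence leaves the other product untouched), converting $d_i$-production into $p_i$-production in a period that starts with positive stock and the reverse in the preceding period. This moves the same amount of surplus production strictly later in time without creating backlogs; because a unit held in inventory costs $h_i$ per unit of waiting time, producing surplus as late as possible strictly lowers holding cost, contradicting optimality unless every starting stock is $0$.

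Once in this normal form I would parametrize sub-cycle $i$ by the length $a_i$ of product $1$'s period and $b_i$ of product $2$'s period. By Lemma~\ref{lemma:k2contphases}, the inventory of product $1$ in sub-cycle $i$ is a triangle rising from $0$ to peak $d_1 b_i$ and back to $0$, and a short computation gives its holding cost as $\beta_1 b_i^2$ with $\beta_1=h_1 d_1 p_1/\bigl(2(p_1-d_1)\bigr)$; symmetrically product $2$ contributes $\beta_2 a_i^2$ with $\beta_2=h_2 d_2 p_2/\bigl(2(p_2-d_2)\bigr)$ after reindexing. The requirement that each period contains enough $p_i$-time to build the needed peak becomes the linear inequalities $(p_1-d_1)a_i\geq d_1 b_i$ and $(p_2-d_2)b_i\geq d_2 a_{i+1}$. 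Hence the average cost reads
\[
\bar c(S^*)=\frac{kS+\beta_1\sum_i b_i^2+\beta_2\sum_i a_i^2}{\sum_i(a_i+b_i)},\qquad S:=s_{1,2}+s_{2,1}.
\]

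The core step is then a uniformization argument. Replacing every $a_i$ by $\bar a:=\tfrac1k\sum_i a_i$ and every $b_i$ by $\bar b:=\tfrac1k\sum_i b_i$ keeps both the total length $\sum_i(a_i+b_i)$ and the setup term $kS$ unchanged, preserves feasibility because the constraints are linear and therefore survive averaging, and does not increase the holding cost since $(\sum_i b_i)^2\leq k\sum_i b_i^2$ and likewise for $a$. Thus the uniformized schedule is again optimal, and it is exactly $k$ identical copies of the simple cycle with parameters $(\bar a,\bar b)$; the factor $k$ cancels in the cost ratio, so that single simple cycle attains the same average cost $\bigl(S+\beta_1\bar b^2+\beta_2\bar a^2\bigr)/(\bar a+\bar b)$ and is feasible on its own, as product $2$ returns to its peak $d_2\bar a$ at the end of each sub-cycle. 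This yields an optimal simple cycle and proves the lemma. The delicate points are the normal-form reduction described above and verifying that the averaged parameters still satisfy the phase-feasibility inequalities; the convexity estimate itself is routine.
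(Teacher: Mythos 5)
Your overall architecture---decompose into $k$ sub-cycles, compute per-sub-cycle holding costs, and uniformize via $\left(\sum_i b_i\right)^2 \leq k\sum_i b_i^2$---is sound, and the computation of $\beta_1 b_i^2+\beta_2 a_{i+1}^2$ is correct \emph{given} your normal form (your $\beta$'s match the paper's coefficient, since $1+\frac{d_2}{p_2-d_2}=\frac{p_2}{p_2-d_2}$). But the normal-form reduction, which you rightly flag as the main obstacle, has a genuine hole: zero stock at the start of every production period can be \emph{infeasible} once the period boundaries are fixed, so no boundary-preserving exchange can establish it. Take $p_1=2$, $d_1=1$ and sub-cycles $(a_1,b_1)=(3,1)$, $(a_2,b_2)=(1,3)$: from the start of product $1$'s second period to the end of the following gap, demand is $d_1(a_2+b_2)=4$ while at most $p_1a_2=2$ can be produced, so every feasible schedule with these boundaries has $q_1\geq 2$ at the start of that period. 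Here your inequality $(p_1-d_1)a_i\geq d_1b_i$ fails for $i=2$, the inventory profile is not the triangle you integrate, and your exchange breaks down exactly as one would fear: the receiving period runs at rate $p_1$ throughout and has no $d_1$-phase left to convert, so the surplus cannot be moved later. Hence optimality does not force zero starting stocks via your local move; ruling out such unbalanced schedules requires moving the boundaries---but your averaging step, the only place boundaries move, presupposes the normal form (both the linear constraints and the cost formula), so as written the argument is circular.

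The repair is to reverse the two steps, which is in effect what the paper does: it averages the period lengths of a minimal counterexample \emph{first} and only then does the case distinction on starting stocks. Averaging first dissolves your obstacle, because capacity aggregates: total production of product $1$ over the cycle gives $p_1\sum_i a_i\geq Cd_1$ with $C=k(\bar a+\bar b)$, which is precisely $(p_1-d_1)\bar a\geq d_1\bar b$ (symmetrically for product $2$), so the balanced schedule \emph{does} admit zero starting stock at every period, and your triangle formula certifies its average cost as $\bigl(S+\beta_1\bar b^2+\beta_2\bar a^2\bigr)/(\bar a+\bar b)$. What remains is to compare this with the original schedule without assuming its periods start at zero stock; this is the paper's case analysis (strict improvement if some period of $S^*$ starts with positive stock, and your Cauchy--Schwarz estimate verbatim when all starting stocks are zero). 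With that reordering your proof goes through and is actually more general than the paper's, whose minimal-counterexample formulation only writes out the case of two production periods per product, whereas your sub-cycle parametrization handles all $k$ at once.
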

\begin{proof}
Let $S^*$ be a minimal counterexample, i.e.
$S^* = [0,t_1]_1, [t_1,t_2]_2, [t_2,t_3]_1, [t_3,C]_2 $,
where $t_1 \neq (t_3-t_2)$. Now denote $A_1 = {(t_1+t_3-t_2)}/{2}$ and consider the following schedule,
\begin{equation*}
S = [0,A_1]_1, [A_1,C/2]_2, [C/2,C/2+A_1]_1, [C/2+A_1,C]_2\;,
\end{equation*}
which is obtained from $S^*$ by replacing the two production periods of each product by two production periods with averaged length.
Since $S^*$ is feasible, we have that $\pi_1^{[0,t_1]} + \pi_1^{[t_2,t_3]} \geq C d_1$ and $\pi_2^{[t_1,t_2]} + \pi_2^{[t_3,C]} \geq C d_2$.
Let $\pi_1^{[0,A_1]} = d_1 C/2$ in $S$ to cover the demand for product $1$ during the first two production periods. Let the production during the other production periods be similar.
Clearly, $S$ is feasible.
Note that $(t_2-t_1)+(C-t_3) = (C/2-A_1)+(C-C/2-A_1)$, i.e. the sum of the lengths of the production periods for product $i$ in $S$, is equal to that in $S^*$.

Now suppose there is in $S^*$ a production period $[a,b]$ for product $1$ with $q_1^a(S^*) > 0$.
Then during the production period $[x,a]_2$, holding costs increase by $q_1^a(S^*) h_1 (x-a)$ compared to $S$ and thus $\bar{c}(S) < \bar{c}(S^*)$.

Next, suppose $q_i^a(S^*)=0$ for every production period $[a,b]_i$.
It is easy to see that holding costs for product $1$ are only paid during production periods for $2$
and during the non-empty phase where product $1$ is produced at rate $p_1$. The same result holds for product 2.
Note that the sum of the lengths of the production periods for product $i$ in $S$, is equal to that in $S^*$ and holding costs are linear.
Hence, the area under the curve of the function of the holding costs over time, is the same in $S$ as in $S^*$, thus $\bar{c}(S) \leq \bar{c}(S^*)$.

Observe that $S$ consists of two simple cycles $S'$ and $S''$ with $S'=S''$. Thus $S'$ is a feasible simple cycle with the same unit costs as $S$. \qed
\end{proof}

For the rest of this section we assume without loss of generality that $h_1 < h_2$, and we only consider simple cycles. Next we show that an optimal schedule for \hms{C,2} consists of at most three phases.

\begin{lemma}\label{lemma:simplecycle2}
There exists an optimal schedule for any \hms{C,2} instance of the following form:
\begin{equation}\label{eq:simplecycle2}
S^* =  [0,t_1]_1^{p_1}, [t_1,t_2]_2^{d_2}, [t_2,C]_2^{p_2} \;,
\end{equation}
where the second phase is empty if and only if ${d_1}/{p_1} + {d_2}/{p_2} = 1$.
\end{lemma}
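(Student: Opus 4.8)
The plan is to start from the structure already granted by Lemmas~\ref{lemma:simplecycle1} and~\ref{lemma:k2contphases}: there is an optimal simple cycle in which each of the two products is produced in a single production period consisting of a (possibly empty) phase at rate $d_i$ followed by a phase at rate $p_i$. Read cyclically, such a cycle has the form $[\cdot]_1^{d_1},[\cdot]_1^{p_1},[\cdot]_2^{d_2},[\cdot]_2^{p_2}$, and after rotating it to begin with $[0,t_1]_1^{p_1}$ the only thing separating it from the claimed form is the possibly nonempty $d_1$-phase, which now sits at the very end of the cycle just before it wraps around. So the whole lemma reduces to two assertions: first, that this $d_1$-phase can be removed without increasing the cost; and second, that once it is gone the remaining $d_2$-phase has positive length exactly when $d_1/p_1+d_2/p_2<1$.

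First I would dispose of the second assertion, which is pure accounting. With the $d_1$-phase empty, product $1$ is produced only at rate $p_1$, so over a cycle of length $C$ it occupies exactly $d_1C/p_1$ time units, while product $2$ occupies $d_2C/p_2$ time units at rate $p_2$ plus $\tau$ time units at rate $d_2$ in its waiting phase. Since there is no idle time (Lemma~\ref{prop:idletimeCD}), these must fill the cycle, giving $d_1C/p_1+d_2C/p_2+\tau=C$, i.e. $\tau=C\,(1-d_1/p_1-d_2/p_2)$. As $C>0$, the waiting phase is empty precisely when $d_1/p_1+d_2/p_2=1$, which is the stated equivalence.

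The heart of the proof, and the step I expect to be the main obstacle, is eliminating the $d_1$-phase. The idea is an exchange argument driven by $h_1<h_2$: during the $d_1$-phase product $1$ is held at zero stock while the expensive product $2$ is drawn down from its peak, so holding cost is paid at the high rate $h_2$; if instead we let product $1$ finish its production at rate $p_1$ and move all of the slack into product $2$'s waiting phase, then during the waiting it is product $1$ whose stock sits high, at the cheaper rate $h_1$. Concretely I would keep $C$ fixed, replace the pair (nonempty $d_1$-phase, shorter $d_2$-phase) by (empty $d_1$-phase, longer $d_2$-phase), note that the number of product switches, and hence the switching cost, is unchanged, and then compare only the holding costs of the two schedules.

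Making this comparison rigorous is the delicate part. Each product's inventory traces a triangle (flat at zero during its waiting phase, rising at rate $p_i-d_i$ during its $p_i$-phase, falling at rate $d_i$ afterwards), so its holding cost over the cycle is $\tfrac{h_i p_i(p_i-d_i)}{2d_i}(\tau_i^p)^2$, where $\tau_i^p$ is the length of its $p_i$-phase and is tied to the waiting length $\tau_i^d$ through $p_i\tau_i^p=d_i(C-\tau_i^d)$. The hard part will therefore be to express the total holding cost as a function of the two waiting lengths subject to the single linear constraint that the phases fill the cycle, and to verify that driving the $d_1$-phase to zero does not increase it. This is exactly where $h_1<h_2$ must be used, and where the genuine work lies, since the two effects (product $1$'s cost rising, product $2$'s cost falling) must be weighed against each other: the claim ultimately amounts to checking that, under $h_1<h_2$, the minimum of this constrained quadratic is attained with product $1$'s waiting phase empty.
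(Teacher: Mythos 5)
Your overall route is the same as the paper's: reduce via Lemmas~\ref{lemma:simplecycle1} and~\ref{lemma:k2contphases} to a rotated simple cycle $[\cdot]_1^{p_1},[\cdot]_2^{d_2},[\cdot]_2^{p_2},[\cdot]_1^{d_1}$, handle the ``empty iff tight'' clause by a time budget, and then eliminate the trailing $d_1$-phase by an exchange argument powered by $h_1<h_2$. Your accounting for the iff-clause is essentially right up to a small slip: during its waiting phase product $2$ covers $d_2\tau$ of its own demand, so the $p_2$-phase has length $d_2(C-\tau)/p_2$ rather than $d_2C/p_2$, giving $\tau = C\,(1-d_1/p_1-d_2/p_2)/(1-d_2/p_2)$; since $1-d_2/p_2>0$ the equivalence $\tau=0 \iff d_1/p_1+d_2/p_2=1$ survives.

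The genuine gap is the step you explicitly defer, and it cannot be closed as stated: the constrained quadratic you set up does \emph{not} always attain its minimum at $\tau_1^d=0$ under the hypothesis $h_1<h_2$ alone. In your notation the holding cost at fixed $C$ is $\sum_i \tfrac{h_i d_i}{2}(1-d_i/p_i)(C-\tau_i^d)^2$ subject to $(1-d_1/p_1)\tau_1^d+(1-d_2/p_2)\tau_2^d = C(1-d_1/p_1-d_2/p_2)$ with $\tau_i^d\ge 0$; differentiating along the constraint, the marginal comparison is $h_1 d_1(C-\tau_1^d)$ versus $h_2 d_2(C-\tau_2^d)$, and the endpoint $\tau_1^d=0$ is optimal if and only if $h_2 d_2 p_2 \ge h_1 p_1(p_2-d_2)$ --- a condition on all six parameters, not implied by $h_1<h_2$. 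Concretely, take $p_1=4$, $d_1=1$, $h_1=1$, $p_2=2$, $d_2=1$, $h_2=3/2$ (so $h_1<h_2$ and $d_1/p_1+d_2/p_2=3/4<1$): with $C=13$ the best schedule of the claimed form costs $\tfrac{15}{32}C^2\approx 79.2$ in holding per cycle, while the four-phase cycle $[0,1]_1^{d_1},[1,4]_1^{p_1},[4,9]_2^{d_2},[9,13]_2^{p_2}$ is feasible, cyclic, has the same two switches, and costs only $78$; this strict improvement holds for every $C$ by homogeneity, hence also after optimizing $C$ against the switching costs. The intuition in your exchange paragraph is precisely where things go wrong: what is paid during a waiting phase is not the rate $h_i$ alone but $h_i$ times the \emph{level} of the other product's stock, which scales with $d_i$ and with how long that product is absent, so the cheap-rate product can nonetheless be the expensive one to hold. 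To be fair, the paper's own proof dispatches this step with the same one-line appeal (``Since $h_1<h_2$, \dots we introduce only one phase where we produce $d_2$''), so you have faithfully reconstructed its strategy and correctly isolated its weak point; but as written your proposal does not prove the lemma, and the postponed verification would fail without an additional hypothesis such as $h_1 p_1(p_2-d_2)\le h_2 d_2 p_2$.
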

\begin{proof}
Let $S$ be an optimal cycle with four non-empty phases, i.e.
\begin{equation*}
S = [0,t_1]_1^{p_1}, [t_1,t_2]_2^{d_2}, [t_2,C]_2^{p_2}, [C,t_3]_1^{d_1}\;.
\end{equation*}
Consider the schedule consisting of only the first three phases, i.e. we remove $[C,t_3]_1^{d_1}$. Note that $\pi_2^{[t_2,C]} = d_2 \left( t_1 + ( t_3 - C ) \right) > d_2 t_1$. Hence the total amount of production for product $2$ can be lowered by $(t_3 - C)d_2$, by decreasing the length of phase $[t_2,C]_2^{p_2}$. Let $\alpha = (t_3 - C){d_2}/{p_2}$ and let
\begin{equation*}
S^* =  [0,t_1]_1^{p_1}, [t_1,t_2 + \alpha]_2^{d_2}, [t_2 + \alpha,C]_2^{p_2}\;.
\end{equation*}
Clearly $S^*$ is feasible and $\bar{c}(S^*) < \bar{c}(S)$.

If ${d_1}/{p_1} + {d_2}/{p_2} = 1$ the schedule is tight and demand can only be met by producing at maximum rate, which implies $[t_1,t_2 + \alpha]_2^{d_2}$ is empty.

If ${d_1}/{p_1} + {d_2}/{p_2} < 1$, there has to be a phase in which the machine does not produce at maximum rate, to avoid overproduction. By Lemma~\ref{lemma:k2contphases} there are at most two phases of production at rate $d_1$ and $d_2$ respectively. Since $h_1 < h_2$, by the above reasoning we introduce only one phase where we produce $d_2$ in order to minimize costs. \qed
\end{proof}

Using this result we calculate the optimal cycle length and corresponding costs. Let $S^*$ be as in Eq.~(\ref{eq:simplecycle2}). The costs of the schedule as a function of the parameter $t_1$, are given as
\begin{equation*}
\bar{c}(t_1) = \left( \frac{ h_1 (p_1 - d_1)}{2} + \frac{h_2 d_1 d_2}{2 p_1} \left( 1 + \frac{d_2}{p_2 - d_2} \right) \right) t_1 + \left((s_{1,2}+s_{2,1})\frac{d_1}{p_1} \right) \frac{1}{t_1}\;,
\end{equation*}
which is minimized for
\begin{equation*}
t^* = \sqrt{ \frac{ 2(s_{1,2}+s_{2,1})d_1 } { h_1 p_1 (p_1 - d_1) + h_2 d_1 d_2 \left( 1 + \frac{d_2}{p_2 - d_2} \right) } }\;.
\end{equation*}
The outcomes are summarized in the following theorem.
\begin{restatable}{theorem}{restateCalculations}\label{theorem:C2}
For \hms{C,2} there exists an optimal schedule of length $t^* {p_1}/{d_1}$ with average costs $\bar{c}(t^*)$.
\end{restatable}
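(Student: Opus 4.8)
The plan is to take the three-phase schedule form guaranteed by Lemma~\ref{lemma:simplecycle2}, express every relevant quantity as a function of the single free parameter $t_1$ (the length of the first phase), assemble the average cost $\bar{c}(t_1)$, and minimize it by elementary calculus.

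First I would pin down the cycle length. Since product~$1$ is produced only during $[0,t_1]$ at rate $p_1$, the production balance $p_1 t_1 = d_1 C$ over one cycle forces $C = t_1\, p_1/d_1$; this already explains the claimed length $t^*\, p_1/d_1$. Next I would fix the remaining phase boundaries from the rate constraints: running product~$2$ at rate $d_2$ leaves its stock constant, while at rate $p_2$ it grows at rate $p_2-d_2$, so cyclicity of product~$2$'s stock forces the third phase to have length $d_2 t_1/(p_2-d_2)$, and $t_2$ is then determined by $C$.

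The substantive step is the holding-cost computation. Using that we may assume each production period begins with zero stock (as established in the proof of Lemma~\ref{lemma:simplecycle1}), both inventory curves are piecewise linear, and I would compute the area under each as a sum of triangles. For product~$1$ the stock rises at rate $p_1-d_1$ for time $t_1$ to a peak $(p_1-d_1)t_1$ and then decays to $0$ at the end of the cycle, giving a single triangle of base $C$; dividing the resulting per-cycle cost by $C$ yields the term $\tfrac{h_1(p_1-d_1)}{2}t_1$. For product~$2$ the stock decays to $0$ during the first phase, stays at $0$ through the rate-$d_2$ phase, and climbs back during the rate-$p_2$ phase, producing two triangles of common height $d_2 t_1$ and bases $t_1$ and $d_2 t_1/(p_2-d_2)$; after dividing by $C=t_1 p_1/d_1$ this contributes $\tfrac{h_2 d_1 d_2}{2p_1}\bigl(1+\tfrac{d_2}{p_2-d_2}\bigr)t_1$. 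Adding the two switch costs $s_{1,2}+s_{2,1}$ incurred per cycle and dividing by $C$ gives the term $(s_{1,2}+s_{2,1})\tfrac{d_1}{p_1}\tfrac{1}{t_1}$, recovering the stated $\bar{c}(t_1)$.

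Finally, $\bar{c}(t_1)$ has the form $A t_1 + B/t_1$ with $A,B>0$, hence is convex on $t_1>0$ and minimized at $t_1=\sqrt{B/A}$; substituting the explicit coefficients $A$ and $B$ produces the claimed $t^*$, and the optimal cycle then has length $t^*\, p_1/d_1$ and average cost $\bar{c}(t^*)$. The main obstacle I anticipate is the holding-cost bookkeeping for product~$2$: one must correctly argue that its minimum stock is $0$ and is attained throughout the rate-$d_2$ phase, and track how the peak $d_2 t_1$ and the phase lengths scale with $t_1$, since a stray factor or sign there propagates directly into $A$ and hence into $t^*$.
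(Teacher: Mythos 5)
Your proposal is correct and follows essentially the same route as the paper: parametrize the three-phase schedule of Lemma~\ref{lemma:simplecycle2} by $t_1$, compute the holding costs as triangle areas under the piecewise-linear stock curves, and minimize the resulting convex function $A t_1 + B/t_1$ at $t_1 = \sqrt{B/A}$. If anything, you are slightly more careful than the paper, since you derive the cycle-length relation $C = t_1 p_1/d_1$ and the third-phase length $d_2 t_1/(p_2-d_2)$ explicitly from the production-balance constraints, whereas the paper substitutes $C = t\,p_1/d_1$ silently in the last line of its computation.
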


\bibliographystyle{plain}
\bibliography{refs}

\begin{thebibliography}{1}

\bibitem{BCGK05}
Nadia Brauner, Yves Crama, Alexander Grigoriev, and Joris {V}an~{D}e Klundert.
\newblock {A framework for the complexity of high-multiplicity scheduling
  problems}.
\newblock {\em Journal of combinatorial optimization}, 9(3):313--323, 2005.

\bibitem{goyal1973scheduling}
SK~Goyal.
\newblock Scheduling a multi-product single machine system.
\newblock {\em Journal of the Operational Research Society}, 24(2):261--269,
  1973.

\bibitem{madigan1968scheduling}
JG~Madigan.
\newblock Scheduling a multi-product single machine system for an infinite
  planning period.
\newblock {\em Management Science}, 14(11):713--719, 1968.

\end{thebibliography}

\clearpage
\appendix

\renewcommand\thelemma{\thesection\arabic{lemma}}

\section{Basic Properties (Proofs for Lemma~\ref{lemma:NPhard} \& Lemma~\ref{theorem:feasibility})}

\restateNPhard*

\begin{proof}
We prove NP-hardness for the Discrete and Fixed cases by a reduction from the Traveling Salesman Problem. 
Consider an instance $I=\{G=(V,E),c(i,j)_{i,j\in V}\}$ of the Traveling Salesman Problem. We construct an instance $I'=\{J,(d_i,p_i,h_i)_{i\in J},(s_{i,j})_{i,j\in J}\}$ of the Lot Sizing Problem as follows:
\\
Let $J=V$ and $s_{i,j}=c(i,j)$ for each $i,j\in J$. Let $d_i=1$, $p_i=|V|=n$ and $h_i=h= \sum_{j,k}{s_{j,k}}+1$ for each $i\in J$, and let $W_{\max} = \sum_{i,j}s_{i,j}$ and $W_{\min} = n \times \min_{i,j}{s_{i,j}}$.
The total cost for a schedule $S$ of length $C$ are defined as $c(S) = H + W$, with holding costs $\displaystyle H = \int_{0}^{C-1} {\sum_i{q_i^t h}}dt$. Note that for every feasible schedule, we have switching costs $W$ such that $W_{\min} \leq W$, and all simple cycles additionally satisfy $W \leq W_{\max}$.

Clearly, since demand and production rates are uniform, the stock level is constant over time. For every simple cycle there exists a feasible \emph{minimal} schedule of length $n$, using the same order of products, with average holding costs $\bar{H}=h n(n-1)/2$ and average switching costs $W_{\min}/n \leq \bar{W} \leq W_{\max}/n$. In fact, this schedule is minimum regarding the holding costs.

Let $S'$ be a feasible non-simple cycle of length $C'$ with total costs $c(S') = H'+W'$.
Note that at least two consecutive production periods of the same product are separated by $n+1$ time units. Hence, we need at least one additional unit in stock and thus $H' \geq h C' n(n-1)/2 + h C'$.
Thus, since $W\leq W_{\max} < h$, we have that the average costs of $S'$ are $\bar{c}(S') \geq H'/C' > \bar{c}(S)$ for every minimal simple cycle $S$.
Observe that the value of $\bar{H}$ is the same for every minimal simple cycle, and therefore the optimal solution to $I$ is the minimal simple cycle which minimizes $W$.

Let $\phi$ be a sequence of visits with costs $B$. Producing each product for 1 time unit with the same sequence as $\phi$ is a feasible solution for LSP with costs $hn(n-1)/2 + B/n$. Conversely, let $\phi$ be a solution for LSP with costs $hn(n-1)/2 + B/n$. This solution is a simple cycle, and therefore the production sequence is a tour with cost $B$. This proves the NP-hardness of the Discrete and the Fixed case.

\medskip

We prove the Continuous case by a similar reduction from the Metric TSP. We let $J = V$ and $s_{i,j} = c(i,j)$ for all $i,j \in J$. Let $d_i = 1$, $p_i = n$ and $h_i = 1$ for all $i \in J$.

Let $\phi$ be the optimal solution to $I$ with costs $c(\phi)$. Let $S$ be any feasible schedule for $I'$ of length $C$ with average costs $\bar{c}(S) = \bar{W} + \bar{H}$, where $\bar{W}$ are the average switching costs per time unit and $\bar{H}$ are the average holding costs per time unit. Let $S^*$ be the simple cycle of length $C^*$ where the products are produced in the same order as in $\phi$, with average costs $\bar{c}(S^*) = \bar{W}^* + \bar{H}^*$.

Since every product needs to be produced at least once in a feasible schedule and triangle inequality holds for the switching costs, we have that $W^* \leq W$. Note that in the Continuous setting, we can choose $C^*$ arbitrarily small. In particular, since holding costs decrease with the cycle length, we can choose $C^*$ such that $\bar{H}^* \leq \bar{H}$ and $\bar{c}(S^*) \leq \bar{c}(S)$. Thus we have that the optimal solution to $I'$ is a simple cycle $S^*$ using the sequence of $\phi$, which minimizes average costs.

Since all $p_i$ are equal and all $d_i$ are equal, every production period in the optimal schedule consists of one phase of length $C^* / n$ where the product is produced at rate $p_i=n$. Since $h_i = 1$, the total holding costs for every product $i$ are given as $$\int_0^{C^*/n} q_i^t dt + \int_{C^*/n}^C q_i^t dt = \frac{n-1}{2n}(C^*)^2\;,$$ and thus the total holding costs of $S^*$ are $H^*=(C^*)^2(n-1)/2$.

We know that the optimal solution $S^*$ to $I'$ minimizes the average costs, and thus the total holding costs are equal to the total switching costs. Hence we have
$$c(\phi)=W^*=H^*=\frac{n-1}{2}(C^*)^2\;,$$
which yields
$$C^*=\sqrt{\frac{2 c(\phi)}{n-1}}\;.$$

Now $\phi$ is an optimal solution for $I$ with costs $c(\phi)$, if and only if there is an optimal solution for $I'$ with average costs $\sqrt{2(n-1)c(\phi)}$. \qed
\end{proof}

\restatefeasibility*

\begin{proof}
Let $S$ be a feasible schedule of length $C$.
Then for each product $i$, the total demand during $S$ equals $C d_i$. Since we can produce at most $p_i$ during a time unit $t$, we know that
$$\frac{C d_i}{p_i} \leq \int_{C}{x_i^t dt}\;.$$
Summing over all products gives
$$\sum_{i\in J}{\frac{C d_i}{p_i}} \leq \sum_{i\in J}{\int_{C}{x_i^t dt}} \leq C\;.$$
Observe that since we can produce at most one type of product at any time, the right-hand side of the first inequality is at most $C$. Dividing by $C$ yields
$\sum_{i\in J}{{d_i}/{p_i}} \leq 1.$

Next, suppose that $\sum_{i\in J} {d_i}/{p_i} \leq 1$. Following the reverse of the proof above, we know that given some initial stock, we can now construct a feasible schedule.
Let $S$ be a schedule of length $C = \prod_{i\in J}{p_i}$.
Now, order the products in $J$ from $\{1,...,n\}$. For each product $i$, produce $\pi_i^{[t_{i-1},t_i]} = C d_i$, where $t_i = t_{i-1} + {C d_i}/{p_i}$ and $t_0=0$.
Clearly, given enough initial stock, demand is met for each product. Since $\sum_{i\in J}(t_i - t_{i-1}) = \sum_{i\in J}{{C d_i}/{p_i}} \leq C$,
all production fits in the cycle. Additionally, given integer demands and production rates, ${C d_i}/{p_i}$ is integer, ensuring feasibility for the Fixed case. \qed
\end{proof}

\section{Idle times (Proof for Lemma~\ref{prop:idletimeCD})}

\restateidletimeCD*
\begin{proof}
We prove by contradiction. Let $S$ be a counterexample,
i.e. there is at least one interval $[a,b]$ of length $(b-a)=t$ where the machine is idle.
Thus in this interval, each product $i$ has a demand $d_i t$ to fulfill. Therefore, for each $i$ there is a stock of at least $d_i t$ at time $a$, and thus for this interval, we pay at least $\sum_{i\in J}{d_i t h_i}$.
\\
Now, for each interval $[c,d]$ such that $c\geq b$ where only a single product $i$ is produced and the machine switches products at $d$, let $\pi_{i}^{[c,d]}\leftarrow\pi_{i}^{[c+t,d+t]}$.
Next, for each product $i$, choose interval $a^*,b^*$, such that $\pi_i^* := \pi_{i}^{[a^*,b^*]} > 0$ and $\pi_{i}^{[b^*,a]} = 0$.
Let $\pi_i^*\leftarrow \max\{\pi_i^*-d_{i}t,0\}$.
Clearly, the schedule is feasible, and we now pay at least $\sum_{i\in J}{\min\{\pi_i^*, d_i t\} h_i}$ less, and thus $S$ was not optimal. \qed
\end{proof}

\begin{remark}\label{remark1}
It is easy to see that we can assume that the initial stock is zero without loss of generality. Suppose that $q_1^0 \geq p_1$. Now reduce the stock by not producing for $t$ time units such that $q_1^t < p_1$.
Now suppose that $0 < q_1^0 < p_1$. Let $S$ be the optimal schedule and let $S^*$ be the optimal schedule given that $q_1^0=0$.
\\
If $q_1^0 = 0 \mod \mathcal{G}$, then since $S^*$ contains all multiples of $\mathcal{G}$ in the range $\{0,p_1 - 1\}$, $S$ is a permutation of $S^*$. The costs of the schedule stay the same.
\\
If $q_1^0 \neq 0 \mod \mathcal{G}$, the extra stock is obsolete. $S = S^*$, but each stock value will increase by $q_1^0 \bmod \mathcal{G}$. Thus the costs increase by $h_1 l^* (q_1^0 \bmod \mathcal{G})$, yielding total cycle costs of $\frac{h_1 p_1}{2} \left(\frac{p_1}{\mathcal{G}} - 1\right) + \frac{h_1 p_1}{\mathcal{G}} (q_1^0 \mod \mathcal{G})$.

\noindent Note that for multiple products, this assumption still holds for at least one product.
\end{remark}

\section{Continuous Case with two products (Proofs for Lemma~\ref{lemma:k2contphases} \& Theorem~\ref{theorem:C2})}
\restatePhases*
\begin{proof}

Let $P$ be a production period for product $i$ in an optimal cycle $S^*$.
Suppose the stock level $q_i$ decreases during $P$, i.e. $q_i^{t+\delta} = q_i^t - \varepsilon$ for some $t \in P, \delta > 0$.
Now let $\pi_i^{[0,t]} \leftarrow \pi_i^{[0,t]} - \epsilon$ and $\pi_i^{[t,t+\delta]} \leftarrow \pi_i^{[t,t+\delta]}+\varepsilon$. Clearly, the total holding costs are reduced and the schedule is still feasible. Thus in an optimal schedule, the production rate is never lower than $d_i$.

Next, suppose there is a phase $[a,c]_i^r$ s.t. $d_i < r < p_i$ with holding costs $h_i\frac{1}{2}(c-a)(q_i^c + q_i^a)$.
Now, let $[a,b]_i^{d_i}$ and $[b,c]_i^{p_i}$ where $b = \frac{c(r-p_i)+a(d_i-r)}{(d-p)}$, with $\int_0^b{q_i^t dt}=0$ and $q_i^c$ remains unchanged.
The holding costs are now $h_i\frac{1}{2}(c-b)(q_i^c + q_i^a)$ and thus $[a,c]_i^r$ was not optimal.
Thus we know that in an optimal schedule, each production period consists of consecutive phases of the form $[a,b]_i^{d_i},[b,c]_i^{p_i}$.

Now suppose that $P$ consists of more than two such phases. Thus there exists a time $t$ s.t. $[a,t]_i^{p_i},[t,b]_i^{d_i}$ with holding costs $h_i \left(\frac{1}{2}(t-a)(q_i^t + q_i^a) + (b-t)q_i^t \right)$.
Now swap the order of the two phases i.e. let $[a,t']_i^{d_i},[t',b]_i^{p_i}$ with $t'=a+(b-t)$, $q_i^b=q_i^t$ and holding costs $h_i \frac{1}{2}((b-a)+(b-t))(q_i^t + q_i^a)$.
Since holding costs decrease by $q_i^a t$, $P$ will consist of at most two phases of the form $[a,b]_i^{d_i},[b,c]_i^{p_i}$.
\end{proof}

\restateCalculations*
\begin{proof}
We denote the optimal schedule by $S^* =  [0,t]_1^{p_1}, [t,t']_2^{d_2}, [t',C]_2^{p_2} $.

We parametrize on $t$. Split the schedule in sub-schedules $S_1 = [0,t]_1^{p_1}$, $S_2 = [t,t']_2^{d_2}$ and $S_3 = [t',C]_2^{p_2}$. Let $c_i(S)$ denote the cost of (sub-)schedule $S$ for product $i$.
Note that $q_1^C=q_1^0=q_2^t=q_2^{t'}=0$, $q_2^C=q_2^0=d_2t$ and $q_1^t = t(p_1-d_1)$ and $q_1^{t'} = q_1^t-d_1(t'-t)$.
We calculate the average cost $\bar{c}(t)$ of $S^*$ as follows:
\begin{align*}
\bar{c}(t) & = \frac{1}{C} \left( c_1(S^*) + c_2(S^*) + s_{1,2} + s_{2,1} \right)\\
& =  \frac{1}{C} \left( c_1(S_1)+c_1(S_2+S_3) \right) + \frac{1}{C} \left( c_2(S_1)+c_2(S_3) \right) + \frac{s_{1,2} + s_{2,1}}{C} \\
& =  \frac{1}{C} \left( h_1\frac{1}{2}t^2(p_1-d_1) + h_1\frac{1}{2}t (p_1-d_1) (C-t) \right) + \frac{1}{C} \left( h_2\frac{1}{2}t^2d_2 + h_2\frac{1}{2} \frac{d_2 t}{p_2-d_2}d_2 t \right) + \frac{s_{1,2} + s_{2,1}}{C} \\
& =  \frac{h_1 t (p_1-d_1)}{2} + \frac{h_2t^2d_2}{2C} \left( 1 + \frac{d_2}{p_2-d_2} \right) + \frac{s_{1,2} + s_{2,1}}{C} \\
& = \left( \frac{ h_1 (p_1 - d_1)}{2} + \frac{h_2 d_1 d_2}{2 p_1} \left( 1 + \frac{d_2}{p_2 - d_2} \right) \right) t_1 + \left((s_{1,2}+s_{2,1})\frac{d_1}{p_1} \right) \frac{1}{t_1}\;.
\end{align*}
Since $c(t_1)$ is convex this expression is minimized when $\frac{dc(t_1)}{dt_1} = 0$. We find:

\begin{equation*}\label{eq:C2length}
t^* = \sqrt{ \frac{ 2(s_{1,2}+s_{2,1})d_1 } { h_1 p_1 (p_1 - d_1) + h_2 d_1 d_2 \left( 1 + \frac{d_2}{p_2 - d_2} \right) } } \;,
\end{equation*}

and thus the optimal average costs are equal to $\bar{c}(t^*)$. \qed
\end{proof}

\end{document}